\title{Arnold stability and Misio{\l}ek curvature}
\author{Taito Tauchi\thanks{
Institute of Mathematics for Industry, Kyushu University, Nishi-ku, Fukuoka, 819-0395, Japan, E-mail address: tauchi.taito.342@m.kyushu-u.ac.jp}
\and 
Tsuyoshi Yoneda\thanks{
Graduate School of Economics, Hitotsubashi University, 2-1
Naka, Kunitachi, Tokyo 186-8601, Japan
E-mail address: t.yoneda@r.hit-u.ac.jp}}
\date{}
\newtheorem{Lemma}{Lemma}[section]
\newtheorem{Fact}[Lemma]{Fact}
\newtheorem{Corollary}[Lemma]{Corollary}
\newtheorem{Theorem}[Lemma]{Theorem}
\theoremstyle{definition}
\newtheorem{Remark}[Lemma]{Remark}
\newtheorem{Definition}[Lemma]{Definition}
\begin{document}
	\maketitle
	
\begin{abstract}
Let $M$ be a compact 2-dimensional Riemannian manifold with smooth boundary 
and consider the incompressible Euler equation on $M$.
In the case that $M$ is the straight periodic channel,
	the annulus or the disc with the Euclidean metric,
	it was proved by 
	T. D. Drivas, G. Misio{\l}ek, B. Shi, and the second author
	that all Arnold stable solutions have no conjugate point 
	on the volume-preserving diffeomorphism group ${\mathcal D}_{\mu}^{s}(M)$.
	They also proposed a question which asks whether this is true or not for any $M$.
	In this article, we give a partial positive answer.
	More precisely,
	we show that almost all the Misio{\l}ek curvature of any Arnold stable solution is nonpositive.
	The positivity of the Misio{\l}ek curvature 
	is a sufficient condition for the existence of a conjugate point.
\end{abstract}
{\bf Keywords}: Euler equation, Arnold stable flow, diffeomorphism group, conjugate point.\\
{\bf MSC2020;} Primary 35Q35; Secondary 35Q31.

	\section{Introduction}
	Let $(M,g)$ be a compact 2-dimensional Riemannian manifold possibly with smooth boundary $\partial M$
and consider the incompressible Euler equation on $M$:
\begin{eqnarray}
	\frac{\partial u}{\partial t} + \nabla_{u}u &=& - \operatorname{grad}p
	\qquad \text{on } M ,
	\nonumber\\
	\operatorname{div}u &=& 0
	\qquad \text{on } M ,
	\label{Euler}
	\\
	g(u,\nu) &=& 0 
	\qquad \text{on }\partial M,
	\nonumber
\end{eqnarray}
where $\nu$ is a unit normal vector field on $\partial M$.
For the case that $M$ is the straight periodic channel,
	the annulus or the disc with the Euclidean metric,
	it was proved by 
	T. D. Drivas, G. Misio{\l}ek, B. Shi, and the second author \cite[Thm.~3]{DMSY}
	that all Arnold stable solutions (see Defition \ref{Def-Arnold-stable}) have no conjugate point on the
	group ${\mathcal D}_{\mu}^{s}(M)$ of volume-preserving Sobolev $H^{s}$ diffeomorphisms 
	   on $M$.
	They also proposed a question \cite[Question 2]{DMSY} which asks whether this is true or not for any 
	compact two-dimensional Riemannian manifold $M$ with smooth boundary.
	In this article,
	we give a partial positive answer.
	For the precise statement,
	we recall the Misio{\l}ek curvature.
	Let $\mu$ be the volume form on $M$ and 
	set
\begin{eqnarray}
	\langle V,W\rangle &:=&
	\int_{M}g(V,W)\mu,
	\label{eq-def-L2-bilinear}
	\\
	|V|^{2}
	&:=&
	\langle V,V\rangle
\end{eqnarray}
for any vector fields $V,W$ on $M$,
which are tangent to $\partial M$.
	\begin{Definition}[{cf.~\cite[(1.3)]{TTconj}, \cite[Lems.~B.6, B.7]{TTCoriolis}}]
	\label{Def-Miciolek-curvature}
		Let $u$ be a stationary solution of \eqref{Euler}
		and $Y$ a divergence-free vector field on $M$,
		which is tangent to $\partial M$.
		The Misio{\l}ek curvature is defined by
		\begin{eqnarray}
			MC_{u,Y}
			:=
			-
			|[u,Y]|^{2}
			-
			\langle
			[[u,Y],Y],
			u
			\rangle.
			\label{eq-Def-Misiolek-curvature}
		\end{eqnarray}
	\end{Definition}
The importance of the Misio{\l}ek curvature 
is the following.
We write $T_{e}{\mathcal D}^{s}_{\mu}(M)$
for the tangent space of ${\mathcal D}^{s}_{\mu}(M)$ at 
the identity element 
$e\in{\mathcal D}^{s}_{\mu}(M)$.
We identify
$T_{e}{\mathcal D}^{s}_{\mu}(M)$
with the space of all Sobolev $H^{s}$ divergence-free vector fields on $M$,
which are tangent to $\partial M$.

\begin{Fact}[{\cite{Misiolek-T} (see also \cite{TTconj})}]
	\label{Misiolek-criterion}
	Let $s>2+\frac{n}{2}$ and $M$ be a compact $n$-dimensional Riemannian 
manifold, possibly with smooth boundary. Suppose that $V\in  T_{e}{\mathcal D}^{s}_{\mu}(M)$ is a stationary solution of the Euler equation \eqref{Euler} on $M$ and take a geodesic $\eta$ on 
${\mathcal D}^{s}_{\mu}(M)$ satisfying $V=\dot{\eta}\circ\eta^{-1}$. Then if $W \in T_{e}{\mathcal D}^{s}_{\mu}(M)$ satisfies $MC_{V,W} > 0$ there
exists a point conjugate to $e\in {\mathcal D}^{s}_{\mu}(M)$ along $\eta(t)$ on $0\leq t\leq t_{0}$ for some $t_{0} >0$.
\end{Fact}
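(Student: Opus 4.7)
My plan is to apply the classical characterization of conjugate points on $\mathcal{D}^s_\mu(M)$ through the index (second variation) form of the energy functional of the right-invariant $L^2$ metric. Recall that a conjugate point to $e$ along $\eta$ in the interval $(0,t_0]$ is produced whenever one can exhibit a ``proper'' variation vector field $\mathcal{W}(t)$ of $\eta|_{[0,t_0]}$, that is, a vector field along $\eta$ satisfying $\mathcal{W}(0)=\mathcal{W}(t_0)=0$, on which the index form is strictly negative. The job is therefore to cook up such a $\mathcal{W}$ from the data $V$ and $W$ with $MC_{V,W}>0$.

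Since $V$ is stationary, the geodesic $\eta(t)$ is simply the flow of $V$, and right translation trivializes the bundle $T\mathcal{D}^s_\mu(M)$ along $\eta$, identifying $T_{\eta(t)}\mathcal{D}^s_\mu(M) \cong T_e\mathcal{D}^s_\mu(M)$. I would take the test variation whose Eulerian (right-translated) representative is $w(t) = f(t)\,W$, where $f:[0,t_0]\to\mathbb{R}$ is a smooth function with $f(0)=f(t_0)=0$, to be chosen. Substituting this ansatz into
$$I_{t_0}(\mathcal{W},\mathcal{W}) \;=\; \int_0^{t_0}\!\Bigl(\|\nabla_{\dot\eta}\mathcal{W}\|^2 - \langle R(\mathcal{W},\dot\eta)\dot\eta,\mathcal{W}\rangle\Bigr)\,dt,$$
and using Arnold's explicit formulas for the Levi-Civita connection $\nabla$ and curvature $R$ of the $L^2$ metric on $\mathcal{D}^s_\mu(M)$, one integrates by parts in $t$ (picking up no endpoint contribution because $f(0)=f(t_0)=0$) to cancel all cross terms linear in $\dot f$. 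The payoff of the calculation is an identity of the form
$$I_{t_0}(\mathcal{W},\mathcal{W}) \;=\; A(W)\!\int_0^{t_0}\!\dot f(t)^2\,dt \;-\; MC_{V,W}\!\int_0^{t_0}\!f(t)^2\,dt,$$
where $A(W)>0$ depends only on $V$ and $W$ (essentially a weighted $L^2$-norm of $W$), and the second summand is exactly the Misio{\l}ek curvature of Definition~\ref{Def-MC-1}.

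With such an identity in hand, the conclusion is immediate. Take $f(t)=\sin(\pi t/t_0)$, so that the Poincar\'e-type ratio $\int_0^{t_0}\dot f^2/\int_0^{t_0}f^2 = \pi^2/t_0^2$, and choose $t_0$ large enough that $\pi^2 A(W)/t_0^2 < MC_{V,W}$. Then $I_{t_0}(\mathcal{W},\mathcal{W}) < 0$, and by the standard Morse index theorem (available because $\mathcal{D}^s_\mu(M)$ is a smooth Hilbert manifold under the hypothesis $s>2+n/2$) a point conjugate to $e$ exists along $\eta$ in $(0,t_0]$.

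The main obstacle is the derivation of the displayed identity: one must verify that the Arnold expansion of $\|\nabla_{\dot\eta}\mathcal{W}\|^2$ and of $\langle R(\mathcal{W},\dot\eta)\dot\eta,\mathcal{W}\rangle$, followed by integration by parts in $t$ and over $M$, conspires to leave precisely the Misio{\l}ek combination $\int_M g\bigl(\nabla_V[V,W]+\nabla_{[V,W]}V,\,W\bigr)\,\mu$ as the coefficient of $\int_0^{t_0}f^2\,dt$ and a manifestly non-negative kinetic coefficient in front of $\int_0^{t_0}\dot f^2\,dt$. The regularity hypothesis $s>2+n/2$ is used to make all curvature and connection formulas pointwise meaningful, and, when $\partial M\neq\emptyset$, additional care is needed to discard the boundary terms arising from integration by parts on $M$ using the tangency of $W$ to $\partial M$. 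This careful bookkeeping is the content of \cite{Misiolek-T}, revisited in the manifold-with-boundary setting in \cite{TTconj}.
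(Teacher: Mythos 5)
Your first half—right-trivializing along $\eta$, taking the variation $w(t)=f(t)W$ with $f(0)=f(t_0)=0$, reducing the index form to a competition between $\int\dot f^2$ and $MC_{V,W}\int f^2$, and then taking $t_0$ large—is exactly the mechanism the paper invokes (via \cite[Thm.~B.5]{TTCoriolis}) to produce a proper variation $\widetilde{W}$ with $E''(\eta)_0^{t_0}(\widetilde{W},\widetilde{W})<0$. (Minor caveat: in Misio{\l}ek's computation one typically only gets an inequality $I_{t_0}\leq A(W)\int\dot f^2 - MC_{V,W}\int f^2$ after discarding a nonnegative term, not the exact identity you claim, but this does not affect the conclusion.)

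The genuine gap is your last step: ``by the standard Morse index theorem \dots a point conjugate to $e$ exists.'' The metric here is the right-invariant $L^2$ metric on the Hilbert manifold ${\mathcal D}^s_\mu(M)$, which is modeled on $H^s$; this is a \emph{weak} Riemannian metric (the $L^2$ inner product does not induce the manifold topology), so the Morse index theorem for strong Riemannian Hilbert manifolds is not available off the shelf, and the hypothesis $s>2+n/2$ does not rescue it. What is actually needed is the implication ``no conjugate point on $\eta|_{[0,t_0]}$ $\Rightarrow$ $E''(\eta)_0^{t_0}(Z,Z)\geq 0$ for all proper variations $Z$,'' which is the content of \cite[Lem.~3]{Misiolek-T}; its proof hinges on the boundedness of the differential of the exponential map, which in turn rests on the boundedness of the curvature operator of ${\mathcal D}^s_\mu(M)$. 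Since the whole point of the statement here is to allow $\partial M\neq\emptyset$, one must check that this boundedness survives in the boundary case---the paper does this by citing \cite[Prop.~3.6]{Misiolek-Stability}---and then conclude by contradiction between the two signs of the index form. Your proposal glosses over precisely the step where the boundary case requires new input.
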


\begin{Remark}
	This was only proved for the case that $M$ has no boundary in \cite{Misiolek-T} (and \cite{TTconj}).
Thus, 
we explain how to apply the proof in \cite{Misiolek-T}
to the case $M$ has a boundary
in the appendix.
\end{Remark}

This fact states that the positivity of the Misio{\l}ek curvature 
ensures the existence of a conjugate point.
This criteria for the existence of a conjugate point 
by using $MC$
was first used in \cite{Misiolek-T} by G. Misio{\l}ek
and recently attracts attention again \cite{DMSY,TTconj,TTCoriolis}.
We note that 
this is only a sufficient condition.
In fact,
there is a stationary solution having a conjugate point,
whose Misio{\l}ek curvature is all nonpositive
(see \cite[Rem.~3]{TTconj}).
However,
philosophically,
the nonpositivity of the Misio{\l}ek curvature
suggests the nonexistence of a conjugate point.
	
Our main theorem of this article is the following.
See Section \ref{Section-Arnold-stable-flow}
for unexplained notions.
\begin{Theorem}
\label{Thm-arnold-stable-stream-nonpositive}
Let $M$ be a two-dimensional Riemannian manifold
possibly with smooth boundary,
$u$ be an Arnold stable solution of \eqref{Euler},
$Y$ a divergence-free vector field on $M$,
which is tangent to $\partial M$.
Suppose that there exist stream functions of $u$ and $Y$. 
Then,
we have
\begin{eqnarray*}
	MC_{u,Y}\leq 0.
\end{eqnarray*}
\end{Theorem}

As a corollary,
we have the following.
Let $S^{1}$ be the one-dimensional sphere
and
$I:=[-1,1]$.
\begin{Theorem}
	\label{Thm-coh-MC-nonpositive}
Let $M$ be a two-dimensional Riemannian manifold
possibly with smooth boundary.
Suppose that either 
$H^{1}_{dR}(M)=0$
or
$M$ is diffeomorphic to $I\times S^{1}$.
Then,
for any Arnold stable solution $u$ of \eqref{Euler}
and 
any divergence-free vector field $Y$ on $M$,
which is tangent to $\partial M$,
we have
\begin{eqnarray*}
	MC_{u,Y}\leq 0.
\end{eqnarray*}
\end{Theorem}

\begin{Remark}
	Note that 
	if $M$ is the disc,
	then we have $H^{1}_{dR}(M)=0$.
	Moreover,
	if $M$ is either the straight periodic channel or the annulus, then $M$ is diffeomorphic to $I\times S^{1}$.
\end{Remark}

\begin{Remark}
It looks like that
	Theorem \ref{Thm-coh-MC-nonpositive} 
	agrees with
	the intuitive argument in \cite{DMSY} before Question 2.
\end{Remark}

\begin{Remark}
Let $a>1$ and
	\begin{eqnarray*}
		M_{a}:=\{(x,y,z)\in{\mathbb R}^{3}\mid x^{2}+y^{2}=a^{2}(1-z^{2})\}
	\end{eqnarray*}
	be a two-dimensional ellipsoid with the Riemannian metric induced by that of ${\mathbb R}^{3}$.
Note that
we have $H^{1}_{dR}(M_{a})=0$
because $M_{a}$ is diffeomorphic to $S^{2}$ for any $a>1$.
Thus, 
Theorem \ref{Thm-coh-MC-nonpositive}
implies 
$MC_{u,Y}\leq 0$ 
for 
any Arnold stable solution $u$ of \eqref{Euler}
and
any divergence-free vector field $Y$ on $M_{a}$.

On the other hand,
Fact \ref{Fact-TTconj-ellipsoid},
which is given below,
implies 
that 
for any zonal flow $u$
(see Definition \ref{Def-zonal-flow} given below for the definition)
 on $M_{a}$ whose support is contained in $M_{a}\backslash \{(0,0,1),(0,0,-1)\}$,
there exists
a divergence-free vector field
$Y$ on $M_{a}$
satisfying $MC_{u,Y}>0$.
This 
implies 
that
any
zonal flow
$u$ on $M_{a}$ whose support is contained in  $M_{a}\backslash \{(0,0,1),(0,0,-1)\}$
never be Arnold stable
by
the assertion of the previous paragraph.
\end{Remark}

\begin{Definition}[{\cite[(1.4)]{TTconj}}]
\label{Def-zonal-flow}
	We say that a vector field $Z$ on $M_{a}$
	is a \textit{zonal flow} if $Z$ has the following form
	\begin{eqnarray*}
		Z=F(z)
		\left(y\frac{\partial}{\partial x}-x\frac{\partial}{\partial y}
		\right)
	\end{eqnarray*}
	for some function $F(z):[-1,1]\to{\mathbb R}$.
\end{Definition}	

Note that a zonal flow is always a stationary solution of the incompressible Euler equation \eqref{Euler} on $M_{a}$.

\begin{Fact}[{\cite[Thm.~1.2]{TTconj}}]
\label{Fact-TTconj-ellipsoid}
Let $a>1$.
Then,
for any
zonal flow
$u$ on $M_{a}$ 
whose support is contained in  $M_{a}\backslash \{(0,0,1),(0,0,-1)\}$,
there exists
a divergence-free vector field
$Y$ on $M_{a}$
satisfying $MC_{u,Y}>0$.
\end{Fact}

By V. I. Arnold \cite{Arnold},
geodesics on ${\mathcal D}_{\mu}^{s}(M)$
correspond to solutions of \eqref{Euler}.
Thus, the existence of a conjugate point is related to a Lagrangian stability of a corresponding solution.

This article is organized as follows.
In Section \ref{Section-Arnold-stable-flow},
we recall the definition and properties of Arnold stability.
In Sections \ref{Section-prf-Thm-assn} and \ref{Sect-prf-cho-MC-nonpositive},
we prove Theorems \ref{Thm-arnold-stable-stream-nonpositive}
and
\ref{Thm-coh-MC-nonpositive},
respectively.
In Appendix A,
we explain how to apply the proof in \cite{Misiolek-T}
to the case $M$ has a boundary.
In Appendix \ref{Appendix-basic-result},
we state the basic results,
which are used in the proof of Theorem \ref{Thm-arnold-stable-stream-nonpositive}.

\section*{Acknowledgments}
The authors are very grateful to G. Misio{\l}ek and 
T. D. Drivas for fruitful discussions. The research of TT was partially supported by Grant-in-Aid for JSPS Fellows (20J00101), Japan Society for the Promotion of Science (JSPS). The research of TY was partially supported by Grant-in-Aid for Scientific Research B (17H02860, 18H01136, 18H01135 and 20H01819), Japan Society for the Promotion of Science (JSPS).

\section{Arnold stable flow}
\label{Section-Arnold-stable-flow}
In this section,
we recall that the definition of an Arnold stable flow and its basic property.
Although almost all the materials in this section are well known, we prove some results for the convenience.
Main references are \cite[Sect.~II.4.A]{AK}, \cite{PTD} and \cite[Sect.~5]{DMSY}.
	
Let $(M,g)$ be a compact 2-dimensional Riemannian manifold possibly with smooth boundary $\partial M$
and consider the incompressible Euler equation \eqref{Euler} on $M$.
\begin{Definition}
Let $u$ be a divergence-free vector field on $M$,
which is tangent to $\partial M$.
A function $\psi$ on $M$ is called a stream function of $u$
if 
$\psi$ satisfies
\begin{eqnarray}
	\star\operatorname{grad}\psi = u, 
	\label{star-grad-psi=u}
\end{eqnarray}
where $\star$ is the Hodge star.
We write 
\begin{eqnarray*}
	\Delta:=\operatorname{div}\circ\operatorname{grad}
\end{eqnarray*}
for the Laplace-Beltrami operator.
In the case \eqref{star-grad-psi=u}, we set
\begin{eqnarray}
\omega:= -\operatorname{div}\star\:u=\Delta\psi.
\label{eq-def-omega-div-grad-psi}
\end{eqnarray}
\end{Definition}

\begin{Lemma}
\label{collinear}
Let $u$ be a stationary solution of \eqref{Euler}
on a two-dimensional Riemannian manifold $M$
possibly with smooth boundary $\partial M$.
Suppose that there exists a function $\psi$ on $M$
such that $u=\star \operatorname{grad}\psi$.
Then $\star\operatorname{grad}\psi$ and $\operatorname{grad}\omega$
are orthogonal. 	
In particular,
$\operatorname{grad}\psi$ and $\operatorname{grad}\omega$ are collinear.
\end{Lemma}
\begin{proof}
	Because $u$ is a time independent solution of \eqref{Euler},
	we have
	\begin{eqnarray}
	\nabla_{u}u=-\operatorname{grad}p,
	\qquad
	\operatorname{div}(u)=0.
	\label{eq-Lem-collinear}
	\end{eqnarray}
Recall that 
$\operatorname{div}(\cdot)=\star d \star (\cdot)^{\flat}
$,
where 
$d$ is the exterior derivative
and
$\flat$ is the musical isomorphism.
We note
that 
the Hodge star
$\star$ commutes with $\flat$
and $\star^{2}=-1$ as an operator on the space of vector fields.
Thus,
applying the operator $\star \circ\operatorname{div}\circ \:\star
= d(\cdot)^{\flat}$ to
the first equation of \eqref{eq-Lem-collinear},
we have
\begin{eqnarray}
 d(\nabla_{u}u)^{\flat}=0.
 \label{eq-Lemma-collinear-dnablauu}
\end{eqnarray}
by
$(\operatorname{grad}p)^{\flat}=dp$ and $d^{2}=0$.
Recall   (cf.~\cite[Thm.~1.17 in Sect.~IV.1.D]{AK})
$$(\nabla_{u}u)^{\flat}
		=
		L_{u}(u^{\flat})
		-\frac{1}{2}d(g(u,u)),
		$$
where $L_{u}$ is the Lie derivative.
Thus, \eqref{eq-Lemma-collinear-dnablauu} implies
\begin{eqnarray}
	L_{u}(d(u^{\flat}))=0.
	\label{eq-Lemma-collinear-L=0}
\end{eqnarray}
by $[L_{u},d]=0$.
On the other hand,
the assumption $u=\star\operatorname{grad}\psi$
implies
\begin{eqnarray*}
	d(u^{\flat})
	&=&
	d(\star(\operatorname{grad}\psi)^{\flat})
	\\
	&=&
	\operatorname{div}(\operatorname{grad}\psi)\mu
	\\
	&=&\omega \mu
\end{eqnarray*}
by $\star \mu =1$ and \eqref{eq-def-omega-div-grad-psi}.
Thus,
\eqref{eq-Lemma-collinear-L=0} implies
\begin{eqnarray*}
	0&=&L_{u}(d(u^{\flat}))
	\\
	&=&
	L_{u}(\omega\mu).
\end{eqnarray*}
By $L_{u}(\mu)=\operatorname{div}(u)\mu=0$ and the Leibniz rule of $L_{u}$,
this is equal to
\begin{eqnarray*}
&=&L_{u}(\omega)\mu
\\
	&=&
	g(u,\operatorname{grad}\omega)\mu,
\end{eqnarray*}
which completes the proof
by $u=\star\operatorname{grad}\psi$.
\end{proof}
	
\begin{Lemma}
\label{F}
Let 
$M$ be
a two-dimensional Riemannian manifold 
possibly with smooth boundary $\partial M$
and
$u$ a stationary solution of \eqref{Euler} on $M$
having
$\psi$ as its stream function.
Set $\omega:=\Delta \psi$.
	Then, 
	there exits a (possibly multivalued) function $F$ on ${\mathbb R}$ satisfying
	$$
	\omega(x) = F(\psi(x))
	\quad
	\text{
	for
	}
	x\in M.
	$$
\end{Lemma}	
\begin{proof}
By Lemma \ref{collinear},
$\operatorname{grad}\psi$ and $\operatorname{grad}\omega$ are collinear.
Thus,
there exits a (possibly multivalued) function $f$
on ${\mathbb R}$
satisfying 
\begin{eqnarray*}
	\operatorname{grad}\omega (x)
	=
	f(\psi(x))
	\operatorname{grad}\psi(x)
	\quad
	\text{
	for
	}
	x\in M.
\end{eqnarray*}	
Take a primitive function $F$ of $f$
(as a function on ${\mathbb R}$).
By the chain rule, we have
\begin{eqnarray}
\operatorname{grad}(F(\psi))=
F'(\psi)\operatorname{grad}\psi
=f(\psi)\operatorname{grad}\psi=\operatorname{grad}\omega.
\label{F'gg}
\end{eqnarray}
Note that
the difference of functions which have the same gradient
must be a constant function.
Thus, adding a suitable constant to $F$
(as a function on ${\mathbb R}$)
if necessary,
we have the lemma.
\end{proof}
\begin{Corollary}
\label{F'}
Let 
$M$ be a two-dimensional Riemannian manifold 
possibly with smooth boundary $\partial M$
and
$u$ be a stationary solution of \eqref{Euler} on $M$
having
$\psi$ as its stream function.
Set
$\omega:=\Delta \psi$.
Then,
the function $F$ in Lemma \ref{F}
satisfies
\begin{eqnarray}
F'(\psi) =\frac{\operatorname{grad} \omega}{\operatorname{grad}\psi}
=
\frac{\operatorname{grad} \Delta \psi}{\operatorname{grad}\psi}.
\label{F'gggg}
\end{eqnarray}
\end{Corollary}
\begin{proof}
This is a consequence of \eqref{F'gg}.
Note that by the collinearity of $\operatorname{grad}\omega$
and $\operatorname{grad}\psi$ (see Lemma \ref{collinear}),
the fraction of \eqref{F'gggg} makes sense.
\end{proof}

Write $\lambda_{1}>0$ for the first eigenvalue of $-\Delta$.
Therefore,
we have
\begin{eqnarray}
	\Delta f\leq -\lambda_{1}f
	\label{eq-def-lambda-laplacian}
\end{eqnarray}
for any function $f$ on $M$
satisfying $\int_{M}f\mu =0$ 
(resp.~$f|_{\partial M}=0$)
if $\partial M$ is empty
(resp.~nonempty),
where $\mu$ is the volume form on $M$.

\begin{Definition}
\label{Def-Arnold-stable}
Let 
$M$ be
a two-dimensional Riemannian manifold 
possibly with smooth boundary $\partial M$.
	We say that a stationary solution $u$ of \eqref{Euler}
is \textit{Arnold stable} if
	the corresponding function $F$ in Lemma \ref{F}
	satisfies
	\begin{eqnarray}
		-\lambda_{1}<F'(\psi)<0,\qquad
		\text{or}
		\qquad
		0<F'(\psi)<\infty.
		\label{eq-Def-Arnold-stable}
	\end{eqnarray}
	\end{Definition}

\begin{Lemma}[{\cite[Prop.\:1.1]{PTD}}]
\label{Arnold-Killing}
	Let 
	$M$ be
	a two-dimensional Riemannian manifold possibly with smooth boundary $\partial M$
	and
	$u$ an Arnold stable stationary solution of \eqref{Euler} with stream function $\psi$.
	Suppose that
	there exits a
		Killing vector field $X$ on $M$,	
	which is tangent to $\partial M$.
	Then we have
	$X\psi=0$.
\end{Lemma}

\begin{proof}
	Note that $\Delta L_{X}=L_{X}\Delta$
	as an operator on the space of functions
	because $X$ is Killing,
	where $L_{X}$ is the Lie derivative.
	By the definition (see \eqref{eq-def-omega-div-grad-psi} and Lemma \ref{F}),
	we have
	$$
	\Delta \psi = F(\psi).
	$$
	The chain rule and $L_{X}\Delta=\Delta L_{X}$ imply
	$$
	(\Delta - F'(\psi))X\psi=0.
	$$
	Thus \eqref{eq-def-lambda-laplacian} and \eqref{eq-Def-Arnold-stable} imply the lemma in the case $\partial M\neq\emptyset$
	because $X\psi|_{\partial M}=0$
	by the assumption that $\psi$ is the stream function of $u$.
	In the case $\partial M=\emptyset$,
	we note that 
	$\int_{M}X\psi\mu = \int_{M}L_{X}(\psi)\mu=0$
	by $L_{X}(\mu)=\operatorname{div}(X)\mu=0$,
	the Leibniz rule of the Lie derivative,
	and the Stokes thoerem.
	Thus,
	\eqref{eq-def-lambda-laplacian} and \eqref{eq-Def-Arnold-stable} also imply the lemma in this case.
\end{proof}

\begin{Remark}
	The equation
	$\Delta L_{X}=L_{X}\Delta$
	is also true as an operator on the space of $p$-forms
	if we interpret that $\Delta$ is the Laplace-de Rham operator
	$\Delta:=(-1)^{n(p+1)+1}
	(d\star d\star +\star d\star d)$,
	where $n:=\dim M$.
	This is because $L_{X}$ commutes the Hodge star operator
	if $X$ is Killing
	(see \cite[(14)]{Deform-Hodge}, for example).
\end{Remark}

\section{Proof of Theorem \ref{Thm-arnold-stable-stream-nonpositive}}
\label{Section-prf-Thm-assn}
In this section,
we prove Theorem \ref{Thm-arnold-stable-stream-nonpositive}.
In the proof,
we use freely 
lemmas in Appendix \ref{Appendix-basic-result}.

\begin{proof}[Proof of Theorem \ref{Thm-arnold-stable-stream-nonpositive}]
By Lemma \ref{Lem-2-dim-Riemannian-is-Kahler},
$(M,g,\omega,\star)$
is an almost K\"aheler manifold,
where $\star$ is the Hodge star operator.
We write $H_{f}$ for the Hamiltonian vector field
of a function $f$ on $M$
(Definition \ref{Def-Hamiltonian-vect}).
By the assumption,
there exist functions $\psi$ and $\phi$
satisfying
\begin{eqnarray*}
	u=\star\operatorname{grad}\psi,
	\quad
	Y=\star\operatorname{grad}\phi
	\qquad
	\in {\mathfrak X}^{t}(M),
\end{eqnarray*}
	where
${\mathfrak X}^{t}(M)$
	is the space of vector fields on $M$,
	which are tangent to ${\partial M}$.
Then,
Lemma \ref{Lem-Hamiltonian-vect-Jgrad-Kahler} 
implies
	\begin{eqnarray*}
		u
		=
		H_{\psi},
		\quad
		Y
		=
		H_{\phi}
		\qquad
	\in {\mathfrak X}^{t}(M).
	\end{eqnarray*}
	Thus,
	we have
	\begin{eqnarray}
		|[u,Y]|^{2}
		&=&
		\langle
		[H_{\psi},H_{\phi}],[H_{\psi},H_{\phi}]
		\rangle
		\nonumber\\
		&=&
		\langle
		H_{\{\psi,\phi\}},H_{\{\psi,\phi\}}
		\rangle
		\nonumber\\
		&=&
		-
		\int_{M}
		\{\psi,\phi\}
		\Delta\{\psi,\phi\}
		\mu
		\label{eq-prf-Thm-MC-1st-term}
	\end{eqnarray}
by Lemmas \ref{Lem-Hamiltonian-vet-Lie-hom} and \ref{Lem-inner-product-on-2D},
where 
$\langle,\rangle$ is given by \eqref{eq-def-L2-bilinear}
and
$\{,\}$ is the Poisson bracket.
	
On the other hand,
we have
\begin{eqnarray*}
	\langle
			[[u,Y],Y],
			u
			\rangle
	&=&
	\langle
	[[H_{\psi},H_{\phi}],H_{\phi}]
	,
	H_{\psi}
	\rangle
	\\
	&=&
	\langle
	H_{\{\{\psi,\phi\},\phi\}}
	,
	H_{\psi}
	\rangle
	\\
		&=&
		\int_{M}
		-
		\{\{\psi,\phi\},\phi\}
		\Delta\psi
		\mu
\end{eqnarray*}
by Lemmas 
\ref{Lem-Hamiltonian-vet-Lie-hom}
and
\ref{Lem-inner-product-on-2D}.
By Lemmas \ref{Lem-int-Poisson-derivative}
and \ref{Lem-tangent-Poisson-zero-on-boundary},
this is equal to
\begin{eqnarray}
		&=&
		-
		\int_{M}
		\{\psi,\phi\}
		\{\phi,
		\Delta\psi\}
		\nonumber
		\\
		&=&
		\int_{M}
		\{\psi,\phi\}
		\{
		\Delta\psi,
		\phi
		\}
		\mu
		\label{eq-prf-Thm-MC-2nodterm}
	\end{eqnarray}
by Lemmas \ref{Lem-Poisson-anti-commute}.

The definition \eqref{eq-Def-Misiolek-curvature} of $MC$
and equations \eqref{eq-prf-Thm-MC-1st-term}, \eqref{eq-prf-Thm-MC-2nodterm}
imply
\begin{eqnarray}
	MC_{u,Y}
	&=&
	\int_{M}
	\{\psi,\phi\}
	\left(
	\Delta\{\psi,\phi\}
	-
	\{\Delta\psi,\phi\}
	\right)
	\nonumber\\
	&=&
	\int_{M}
	H_{\psi}(\phi)
	\left(
	\Delta
	H_{\psi}
	-
	H_{\omega}
	\right)
	(\phi)
	\mu
	\label{eq-prf-Thm-Fgg=FHH}
\end{eqnarray}
by Lemma \ref{Lem-Poisson-formulae}
and \eqref{eq-def-omega-div-grad-psi}.
On the other hand,
there exists a function $F$
satisfying
$$
	F'(\psi)\operatorname{grad}\psi
	=
	\operatorname{grad} \omega
	$$
by
the Arnold stable assumption
(Lemma \ref{F}).
Applying the Hodge star,
we have
\begin{eqnarray}
	F'(\psi)
	H_{\psi}
	=
	H_{\omega}
	\label{eq-prf-Thm-F'HH}
\end{eqnarray}
by Lemma \ref{Lem-Hamiltonian-vect-Jgrad-Kahler}.
Thus,
\eqref{eq-prf-Thm-Fgg=FHH} and \eqref{eq-prf-Thm-F'HH} imply
\begin{eqnarray*}
	MC_{u,Y}
	&=&
	\int_{M}
	H_{\psi}(\phi)
	\left(
	\Delta
	H_{\psi}
	-
	F'(\psi)
	H_{\psi}
	\right)
	(\phi)
	\mu
	\\
	&=&
	\int_{M}
	H_{\psi}(\phi)
	\left(
	\Delta
	-
	F'(\psi)
	\right)
	H_{\psi}
	(\phi)
	\mu.
\end{eqnarray*}
Note that $H_{\psi}(\phi)|_{\partial M}=\{\psi,\phi\}|_{\partial M}=0$
by Lemma \ref{Lem-tangent-Poisson-zero-on-boundary}.
Therefore,
the theorem
is a consequence of \eqref{eq-def-lambda-laplacian}
and \eqref{eq-Def-Arnold-stable}
in the case $\partial M\neq \emptyset$.
Moreover,
if $\partial M = \emptyset$,
we have
\begin{eqnarray*}
	\int_{M}
	H_{\psi}(\phi)\mu 
	&=&
	\int_{M}
	L_{H_{\psi}}(\phi \mu)
	\\
	&=&
	\int_{M}
	d(\iota_{H_{\psi}}(\phi\mu))
	\\
	&=&
	0
\end{eqnarray*}
by $\operatorname{div}(H_{\mu})=0$ (Lemma \ref{Lem-Hamiltonian-vet-div-free})
and the Stokes theorem.
Thus,
\eqref{eq-def-lambda-laplacian}
and \eqref{eq-Def-Arnold-stable}
also imply 
the theorem
in this case.
\end{proof}

\section{Proof of Theorem \ref{Thm-coh-MC-nonpositive}}
\label{Sect-prf-cho-MC-nonpositive}
In this section,
we prove Theorem \ref{Thm-coh-MC-nonpositive}.
Let $M$ be a two-dimensional Riemannian manifold
possibly with smooth boundary $\partial M$.
Recall that ${\mathfrak X}^{t}(M)$
is the space of vector fields on $M$,
which are tangent to $\partial M$.
For the notational simplicity,
we set
\begin{eqnarray*}
{\mathfrak X}_{\mu}^{t}(M)
&:=&
\{
Y\in{\mathfrak X}^{t}(M)\mid
\operatorname{div}(Y)=0
\},
\\
{\mathfrak X}_{\mu}^{t}(M)^{str}&:=&\{Y\in {\mathfrak X}^{t}_{\mu}(M)\mid
Y
\text{
has a stream function}
\},
\\
{\mathfrak X}_{\mu}^{t}(M)^{no}
&:=&
{\mathfrak X}_{\mu}^{t}(M)/
{\mathfrak X}_{\mu}^{t}(M)^{str}.
\end{eqnarray*}
Moreover,
we write
\begin{eqnarray}
	H^{1}_{dR}(M)
	:=
	\{\alpha\in {\mathcal E}^{1}(M)\mid d\alpha=0\}/d(C^{\infty}(M)).
	\label{eq-def-H1-dR}
\end{eqnarray}
for the 1st de Rham cohomology,
where ${\mathcal E}^{1}(M)$ is the space of one-forms on $M$.
Before proving Theorem \ref{Thm-coh-MC-nonpositive}, we need a lemma.

\begin{Lemma}
\label{Lem-no-stream-isom-1st-dR}
	Let $M$ be a two-dimensional Riemannian manifold
	possibly with smooth boundary $\partial M$
	and
	$j:\partial M\hookrightarrow M$ 
	the inclusion.
	Then,
	${\mathfrak X}_{\mu}^{t}(M)^{no}$
	is isomorphic to
	the kernel of $j^{*}:H^{1}_{dR}(M)\to H^{1}_{dR}(\partial M)$,
	where $j^{*}$ is the pull back.
	(We set $H^{1}_{dR}(\partial M):=0$ if $\partial M=\emptyset$.)
\end{Lemma}

\begin{Remark}
	The kernel $j^{*}:H^{1}_{dR}(M)\to H^{1}_{dR}(\partial M)$
	is isomorphic to 
	the relative de Rham cohomology
	$H^{1}(j)$,
	see \cite[Sect.~6 of Ch.~1]{BottTu} or
	\cite[Sect.~8.2]{DF-Weintraub},  for example.
\end{Remark}

\begin{proof}[Proof of Lemma \ref{Lem-no-stream-isom-1st-dR}]
Let $Y$ be a vector field on $M$
(which is not necessarily tangent to $\partial M$).
Note that
	\begin{eqnarray*}
		\operatorname{div}(Y)
		&=&
		\star d (\star Y^{\flat}).
	\end{eqnarray*}
	Thus,
	$Y$ is divergence-free if and only if 
	the one-form $\star Y^{\flat}$
	is closed.
	Therefore,
	we have
	\begin{eqnarray}
		{\mathfrak X}_{\mu}(M) &\simeq & 
		\{\alpha\in {\mathcal E}^{1}(M)\mid d\alpha=0\}
		\label{eq-Lem-isom-div-closed}
		\\
		Y&\mapsto &\star Y^{\flat}, 
		\nonumber
	\end{eqnarray}
	where ${\mathfrak X}_{\mu}(M)$ is the space of divergence-free vector fields
	(which are not necessarily tangent to $\partial M$).
Moreover,
	by definition,
	$Y$ has a stream function 
	if and only if
	\begin{eqnarray*}
		Y=\star\operatorname{grad}\phi
	\end{eqnarray*}
	for some function $\phi$ on $M$.
	Applying the musical isomorphism $\flat$ and 
	the Hodge operator $\star$,
	we have
	\begin{eqnarray*}
		\star Y^{\flat} = -d\phi.
	\end{eqnarray*}
	Thus,
	$Y$ has a stream function
	if and only if 
	the one-form $\star Y^{\flat}$ is exact.
	Therefore,
	we have
	an isomorphism
	\begin{eqnarray}
		\{Y\in{\mathfrak X}_{\mu}(M)\mid
		Y
		\text{
		has a stream function}
		\}
		&\simeq &
		d(C^{\infty}(M))
		\label{eq-Lem-isom-stream-exact}
		\\
		Y
		&\mapsto& \star Y^{\flat}.
		\nonumber
	\end{eqnarray}
	Moreover,
	$Y$ is tangent to $\partial M$
	if and only if 
	\begin{eqnarray*}
		g(\star Y, W)|_{\partial M}=0
	\end{eqnarray*}
	for any vector fields $W$ on $\partial M$
	because 
	$\star$ is the $\frac{\pi}{2}$ rotation operator.
	This equation is equivalent to
	\begin{eqnarray*}
		\star Y^{\flat}(W)|_{\partial M} = 0
	\end{eqnarray*}
	for any vector fields $W$ on $\partial M$.
	Thus,
	we have an isomorphism
	\begin{eqnarray}
		{\mathfrak X}^{t}(M)
		&\simeq &
		\{\alpha\in {\mathcal E}^{1}(M)\mid 
		j^{*}(\alpha)=0\} 
		\label{eq-Lem-isom-tangent-pullback}
		\\
		Y
		&\mapsto& \star Y^{\flat}.
		\nonumber
	\end{eqnarray}
	Then,
	the lemma is a consequence of 
	\eqref{eq-Lem-isom-div-closed},
	\eqref{eq-Lem-isom-stream-exact},
	and \eqref{eq-Lem-isom-tangent-pullback}
	by the definition \eqref{eq-def-H1-dR} of $H^{1}_{dR}(M)$.
\end{proof}

We prove Theorem \ref{Thm-coh-MC-nonpositive}
by using this lemma.

\begin{proof}[Proof of Theorem \ref{Thm-coh-MC-nonpositive}]
	By Theorem \ref{Thm-arnold-stable-stream-nonpositive},
	it is enough to show ${\mathfrak X}^{t}_{\mu}(M)^{no}=0$.
	Moreover,
	by Lemma \ref{Lem-no-stream-isom-1st-dR},
	it is enough to show 
	$j^{*}:H^{1}_{dR}(M)\to H^{1}_{dR}(\partial M)$
	is injective.
	In the case $H^{1}_{dR}(M)=0$,
	this is obvious.
	Therefore, we only consider the case that
	$M$ is diffeomorphic to $I \times S^{1}$.
	Then,
	the de Rham cohomology only depends on the differentiable structure of $M$,
	it is enough to prove the theorem 
	in the case $M= I \times S^{1}$.
	Thus,
	we have to show that 
	if $\alpha \in {\mathcal E}^{1}(I \times S^{1})$
	satisfy $d\alpha =0$ and $j^{*}\alpha =0$,
	then, there exists a function $\phi$ on $I \times S^{1}$
	such that 
	$d\phi =\alpha$.
	For this end,
	we take a coordinate
	$(r,\theta)\in I \times S^{1}$
	and
	$\alpha \in {\mathcal E}^{1}(I \times S^{1})$
	satisfying $d\alpha =0$ and $j^{*}\alpha =0$.
	Write
	\begin{eqnarray}
		\alpha
		=
		f(r,\theta)
		dr
		+
		h(r,\theta)d\theta.
		\label{eq-Lem-def-alpha}
	\end{eqnarray}
	Then,
	$d\alpha=0$ implies
	\begin{eqnarray*}
		(-\partial_{\theta}f+\partial_{r}h)dr\wedge d\theta =0.
	\end{eqnarray*}
	Thus,
	by considering
	the Fourier series
	\begin{eqnarray*}
		f(r,\theta)
		=
		\sum_{n\in{\mathbb Z}} f_{n}(r)e^{in\theta},\qquad
		h(r,\theta)
		=
		\sum_{n\in{\mathbb Z}} h_{n}(r)e^{in\theta},
	\end{eqnarray*}
	we have
	\begin{eqnarray}
		inf_{n} (r)&=&\partial_{r}h_{n}(r)
		\label{eq-prf-Thm-inf=rh}
	\end{eqnarray}
	for all $n\in {\mathbb Z}$.
	In particular,
	we have
	\begin{eqnarray}
		\partial_{r}h_{0}(r)=0.
		\label{eq-prf-Thm-IS1-delr-h0=0}
	\end{eqnarray}
	
	On the other hand,
	$j^{*}(\alpha)=0$ implies
	\begin{eqnarray*}
		h(\pm 1, \theta)
		=
		\sum_{n\in{\mathbb Z}} h_{n}(\pm 1)e^{in\theta}
		=
		 0
	\end{eqnarray*}
	for any $\theta\in S^{1}$
	because
	$j$ is the inclusion
	$\partial (I\times S^{1})=\{\pm 1\}\times S^{1}
	\hookrightarrow I\times S^{1}$.
	In particular,
	we have
	\begin{eqnarray}
		h_{0}(\pm 1)=0.
		\label{eq-prf-Thm-IS1-h0=0}
	\end{eqnarray}
	Thus,
	\eqref{eq-prf-Thm-IS1-delr-h0=0}
	and
	\eqref{eq-prf-Thm-IS1-h0=0}
	imply
	\begin{eqnarray}
		h_{0}=0.
		\label{eq-prf-Thm-h0=0}
	\end{eqnarray}
	
	Take a primitive function $F_{0}(r)$ of $f_{0}(r)$
	and
    define a function $\phi$ on $I\times M$
	by
	\begin{eqnarray*}
		\phi(r,\theta)
		:=
		F_{0}(r)
		+
		\sum_{\substack{n\in {\mathbb Z}\\n\neq0}}
		\frac{h_{n}(r)}{in}e^{in\theta}.
	\end{eqnarray*}
	Then,
	\eqref{eq-Lem-def-alpha}
	and
	\eqref{eq-prf-Thm-inf=rh}
	imply
	\begin{eqnarray*}
		d\phi = \alpha.
	\end{eqnarray*}
	This completes the proof.
\end{proof}

\section*{Appendix: A sufficient criterion of Misio{\l}ek}
\renewcommand{\thesection}{\Alph{section}}
\setcounter{section}{1}
\setcounter{equation}{0}
\label{Misiolek-criterion-Appx}
In this appendix,
we explain how to apply the proof of Fact \ref{Misiolek-criterion} in \cite{Misiolek-T}
to the case $M$ has a boundary.

\subsection{${\mathcal D}_{\mu}^{s}(M)$ in the case $M$ has a boundary}
In this subsection,
we recall briefly the theory of volume-preserving diffeomorphism group ${\mathcal D}_{\mu}^{s}(M)$
in the case that $M$ has a boundary.
Main reference is \cite{EMa}.

Let $M$ be a compact $n$-dimensional Riemannian manifold 
with smooth boundary,
${\mathcal D}_{\mu}^{s}(M)$
the group of all diffeomorphisms of Sobolev
class $H^{s}$ preserving the volume form on $M$.
Then,
the tangent space
$T_{e}{\mathcal D}_{\mu}^{s}(M)$
of ${\mathcal D}_{\mu}^{s}(M)$
at the identity element $e\in {\mathcal D}_{\mu}^{s}(M)$
is identified with the space of divergence-free vector fields on $M$
which are tangent to $\partial M$.
If $s>\frac{n}{2}+1$,
${\mathcal D}_{\mu}^{s}(M)$
has an infinite-dimensional Hilbert manifold structure
with the right-invariant $L^{2}$ Riemannian metric given by
\begin{eqnarray*}
	\langle X, Y \rangle :=
	\int_{M} g(X,Y) \mu,
\end{eqnarray*}
where $X,Y\in T_{e}{\mathcal D}_{\mu}^{s}(M)$.

By V. I. Arnold \cite{Arnold},
a solution $u$ of the incompressible Euler equation \eqref{Euler} on $M$ 
corresponds to a geodesic $\eta$ on ${\mathcal D}_{\mu}^{s}(M)$
starting at $e\in{\mathcal D}_{\mu}^{s}(M)$
via $u=\dot{\eta}\circ\eta^{-1}$.
Thus,
it is important to study of the geometry of ${\mathcal D}_{\mu}^{s}(M)$.
In particular, the existence of a conjugate point on a geodesic has attractive considerable attention because
it is related to the Lagrangian stability of the corresponding solution.

\subsection{Sketch of the proof of Fact \ref{Misiolek-criterion}}
\label{Appx-Sketch-Misiolek-criterion}
In this subsection,
we explain how to apply the proof of Fact \ref{Misiolek-criterion} in \cite{Misiolek-T} to the case that $M$ has a boundary.
For the convenience,
we rewrite
Fact \ref{Misiolek-criterion}.

\renewcommand{\thesection}{\arabic{section}}
\setcounter{section}{1}
\setcounter{Lemma}{1}
\begin{Fact}
	Let $M$ be a compact $n$-dimensional Riemannian 
manifold with smooth boundary and $s>2+\frac{n}{2}$. Suppose that $V\in  T_{e}{\mathcal D}^{s}_{\mu}(M)$ is a stationary solution of the Euler equation \eqref{Euler} on $M$ and take a geodesic $\eta$ on 
${\mathcal D}^{s}_{\mu}(M)$ satisfying $V=\dot{\eta}\circ\eta^{-1}$. Then if $W \in T_{e}{\mathcal D}^{s}_{\mu}(M)$ satisfies $MC_{V,W} > 0$ there
exists a point conjugate to $e\in {\mathcal D}^{s}_{\mu}(M)$ along $\eta(t)$ on $0\leq t\leq t_{0}$ for some $t_{0} >0$.
\end{Fact}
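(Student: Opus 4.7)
The plan is to transport Misio{\l}ek's argument from \cite{Misiolek-T} for closed manifolds to the boundary case, verifying at each step that the presence of $\partial M$ does not introduce unwanted boundary terms once one works in the category of divergence-free fields tangent to $\partial M$. To begin, I would invoke Ebin--Marsden \cite{EMa} to ensure that $\mathcal{D}^s_\mu(M)$ is a smooth Hilbert manifold with a smooth right-invariant $L^2$ weak Riemannian metric, a smooth Levi--Civita connection, and that its geodesics starting at $e$ correspond bijectively with solutions of the Euler system \eqref{Euler} through $u = \dot\eta \circ \eta^{-1}$. Under the stationarity hypothesis on $V$, the geodesic $\eta$ exists on an interval $[0, t_0]$ for some $t_0 > 0$.

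The core of the proof is a Morse-index argument. For a variation field $J$ along $\eta$ with $J(0) = J(t_0) = 0$, the second variation of energy is
$$I(J, J) = \int_0^{t_0} \bigl( \|\dot J\|^2 - \langle R(J, \dot\eta) \dot\eta, J \rangle \bigr)\, dt,$$
and standard Hilbert-manifold Morse theory shows that $I(J, J) < 0$ for some admissible $J$ forces a conjugate point in $(0, t_0]$. As trial field I would choose $J(t) = \phi(t/t_0)\, dR_{\eta(t)} W$, where $\phi \in C_c^{\infty}((0, 1))$ is a fixed bump function. Arnold's curvature formula on $\mathcal{D}^s_\mu(M)$ rewrites $\langle R(W, V) V, W \rangle$ in terms of the quantity $MC_{V, W}$ of Definition \ref{Def-MC-1} (this is, in fact, how that definition is motivated), so the curvature integrand in $I(J, J)$ contributes $- MC_{V, W}\, \phi(t/t_0)^2$ plus corrections of order $t_0$. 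The kinetic term $\int_0^{t_0} \|\dot J\|^2 \, dt$ is bounded by a constant times $t_0^{-1}$ whose coefficient depends only on $\phi$ and $W$; choosing $t_0$ sufficiently small and $\phi$ once and for all, the curvature term dominates and $I(J, J) < 0$ whenever $MC_{V, W} > 0$.

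The only place where $\partial M$ could obstruct this scheme is in deriving the explicit formula for $R$ and in reducing Arnold's expression to $MC_{V, W}$: both derivations rely on integration by parts on $M$. However, every such integration involves only fields in $T_e \mathcal{D}^s_\mu(M)$, i.e.\ divergence-free and tangent to $\partial M$, so the boundary contributions vanish by precisely the cancellation exploited in Lemma \ref{MCZWintint}. Moreover, right-translation by $\eta(t) \in \mathcal{D}^s_\mu(M)$ preserves both the divergence-free condition and tangency to $\partial M$, so the trial field $J(t)$ remains admissible throughout. The main obstacle is therefore a careful bookkeeping one: one must identify the correct Leray--Hodge projector onto divergence-free fields tangent to $\partial M$ (with the appropriate Neumann-type condition on the pressure) so that the algebraic identities in \cite{Misiolek-T} carry over, producing the key inequality relating $I(J, J)$ to $-MC_{V, W}$ with no residual boundary contribution. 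Once this projector is correctly installed, the remainder of Misio{\l}ek's argument proceeds verbatim.
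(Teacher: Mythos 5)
Your overall strategy (second--variation/index--form argument with a cut-off of the right-translated field $\tilde W(t)=dR_{\eta(t)}W$) is the same as the paper's, but there are two genuine problems. First, the quantitative step is backwards. With $J(t)=\phi(t/t_0)\,\tilde W(t)$ and the right-invariant $L^2$ metric, the kinetic term scales like $t_0^{-1}\|W\|^2\int_0^1\phi'(s)^2\,ds$, while the curvature/$MC$ contribution scales like $-t_0\,MC_{V,W}\int_0^1\phi(s)^2\,ds$ (the integrand $\|\nabla_{\dot\eta}\tilde W\|^2-\langle R(\tilde W,\dot\eta)\dot\eta,\tilde W\rangle=-MC_{V,W}$ is constant in $t$ by right-invariance and stationarity of $V$). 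Hence letting $t_0\to 0$ makes the positive kinetic term blow up and the negative term vanish; you must take $t_0$ sufficiently \emph{large}, not small, to get $I(J,J)<0$. As written, your inequality goes the wrong way. (The paper outsources this step to Theorem B.5 of \cite{TTCoriolis}, which produces such a $t_0$ and a field $\widetilde W$ vanishing at the endpoints with $E''(\eta)_0^{t_0}(\widetilde W,\widetilde W)<0$.)

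Second, you dismiss as ``standard Hilbert-manifold Morse theory'' the implication ``no conjugate point on $[0,t_0]$ $\Rightarrow$ $I(Z,Z)\ge 0$ for all admissible $Z$,'' but this is exactly the non-standard part here and the reason the paper's appendix exists. The metric on ${\mathcal D}_\mu^s(M)$ is only a weak ($L^2$) Riemannian metric on an $H^s$ manifold, so the finite-dimensional index theorem does not apply off the shelf; Misio{\l}ek's Lemma~3 proves this implication using the boundedness of the curvature operator (hence of the differential of the exponential map), and the point of the boundary case is that this boundedness must be re-verified, which the paper does by citing \cite[Prop.~3.6]{Misiolek-Stability}. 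Your proposal instead locates the boundary difficulty in integration-by-parts terms in the curvature/$MC$ identities; those do vanish for divergence-free fields tangent to $\partial M$, as you say, but that is not the missing ingredient. Without addressing the positivity-of-the-index-form direction in the weak-metric, with-boundary setting, the contradiction that concludes the proof is not justified.
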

\renewcommand{\thesection}{\Alph{section}}
\setcounter{section}{1}

\begin{proof}[Sketch of the proof of Fact \ref{Misiolek-criterion}]
Because the Riemannian metric of ${\mathcal D}_{\mu}^{s}(M)$ is right invariant,
Theorem B.5 in \cite{TTCoriolis}
shows that there exist $t_{0}>0$ and a vector field $\widetilde{W}$ on $\eta$ satisfying
$\widetilde{W}(0)=\widetilde{W}(t_{0})=0$
and
\begin{eqnarray}
	E''(\eta)_{0}^{t_{0}}(\widetilde{W},\widetilde{W})<0
	\label{E''<0}
\end{eqnarray}
by the assumption $MC_{V,W}>0$.
Here 
$E''(\eta)_{0}^{t_{0}}(\widetilde{W},\widetilde{W})$
is the second variation of the energy function $E_{0}^{t_{0}}(\eta)$ of $\eta$:
$$
E_{0}^{t_{0}}(\eta):=\frac{1}{2}\int_{0}^{t_{0}}\langle \dot{\eta},\dot{\eta}\rangle dt.
$$

On the other hand,
the same argument of \cite[Lem.~3]{Misiolek-T}
gives 
\begin{eqnarray}
E''(\eta)_{0}^{t_{0}}(Z,Z)\geq 0
\label{E''>0}
\end{eqnarray}
for any vector field $Z(t)$ on $\eta$ with $Z(0)=Z(t_{0})=0$
if there exists no conjugate point on $\eta(t)$ ($0\leq t\leq t_{0}$).
The essential point of the argument of \cite[Lem.~3]{Misiolek-T}
is that the differential of the exponential map is bounded operator,
which is deduced by
the boundedness of the curvature of ${\mathcal D}_{\mu}^{s}(M)$ in \cite[Lem.~3]{Misiolek-T}.
This boundedness of the curvature is also guaranteed for the case that $M$ has a boundary by \cite[Prop.~3.6]{Misiolek-Stability}.
Thus,
the same argument is valid in the case that $M$ has a boundary
and the contradiction of \eqref{E''<0} to \eqref{E''>0}
gives the desired result.
\end{proof}

\section{Some basic results}
\label{Appendix-basic-result}
In this section,
we recall basic results on symplectic and almost K\"ahler manifolds.
Although almost all the materials in this section are well known, we prove some results for the convenience.
Main references are 
\cite[Sect.~4]{Kahler-ballman},
\cite[Sect.~22]{ISM-Lee}
and
\cite[Sect.~2]{almost-Kahler-ST}.

\subsection{Symplectic manifold with boundary}
Let $(M,\omega)$ be a compact symplectic manifold
possibly with smooth boundary $\partial M$.
We write ${\mathfrak X}(M)$
(resp.~${\mathfrak X}^{t}(M)$)
for the space of vector fields on $M$
(resp.~which are tangent to $\partial M$).

\begin{Definition}
\label{Def-Hamiltonian-vect}
Let $f\in C^{\infty}(M)$.
Then, the Hamilton vector field 
$H_{f}\in{\mathfrak X}(M)$ 
of $f$ 
is defined by 
the equation
\begin{eqnarray}
	\iota_{H_{f}}\omega=df
	\label{eq-Def-Hamiltonian-vect}
\end{eqnarray}
where $d$ is the exterior derivative
and $\iota_{H_{f}}$ is the interior derivative.
\end{Definition}

We always take 
\begin{eqnarray}
\mu:=\frac{1}{n!}\omega^{n}
:=
\frac{1}{n!}
\underbrace{\omega\wedge\dots\wedge\omega}_\text{$n$ times}.
\label{eq-def-volume-form-on-symplectic}
\end{eqnarray}
as the volume form on $M$,
where $n:=\frac{\dim M}{2}$.

\begin{Definition}
	\label{Def-divergence-on-Symplectic}
Let $V\in{\mathfrak X}(M)$.
The divergence of $V$
is defined by
\begin{eqnarray*}
	\operatorname{div}(V)\mu
	=
	L_{V}(\mu)
\end{eqnarray*}
where $L_{V}$ is the Lie derivative.
\end{Definition}

\begin{Lemma}
	\label{Lem-Hamiltonian-vet-div-free}
Let $f\in C^{\infty}(M)$.
Then,
we have
\begin{eqnarray*}
	\operatorname{div}(H_{f})=0.
\end{eqnarray*}
\end{Lemma}

\begin{proof}
By \eqref{eq-def-volume-form-on-symplectic}
and
the Cartan magic formula
$L_{H_{f}}=d\circ\iota_{H_{f}}+\iota_{H_{f}}\circ d$,
we have
	\begin{eqnarray*}
		n!L_{H_{f}}(\mu)
		&=&
		d(
		\iota_{H_{f}}
		(\omega^{n}))
	\end{eqnarray*}
because $d\omega=0$.
By the graded Leibniz rule of the interior derivative
and
\eqref{eq-Def-Hamiltonian-vect},
this is equal to	
	\begin{eqnarray*}
		&=&
		n
		d(df\wedge \omega^{n-1}).
	\end{eqnarray*}	
By the Leibniz rule of $d$,
	and $d^{2}=0$,
	this is equal to
	\begin{eqnarray*}
		&=&
		n\left(
		ddf\wedge \omega^{n-1}
		-df\wedge (d\omega^{n-1})
		\right)
		\\
		&=&
		0,
	\end{eqnarray*}
which completes the proof.
\end{proof}

\begin{Definition}
\label{Def-Poisson-bracket}
Let $f,g\in C^{\infty}(M)$.
The Poisson bracket of $f$ and $g$
is defined by
\begin{eqnarray}
	\{f,g\}
	:=
	-\omega(H_{f},H_{g})
	=
	\omega(H_{g},H_{f}).
	\label{eq-Def-Poisson-bracket}
\end{eqnarray}
	
\end{Definition}

\begin{Lemma}
\label{Lem-Poisson-anti-commute}
For $f,g\in C^{\infty}(M)$,
we have
$$
	\{f,g\}
	=
	-
	\{g,f\}.$$
\end{Lemma}
\begin{proof}
	By the skew-symmetry of $\omega$ and the definition \eqref{eq-Def-Poisson-bracket},
	this lemma is obvious.
\end{proof}

\begin{Lemma}
\label{Lem-Poisson-formulae}
For $f,g\in C^{\infty}(M)$,
we have
$$
	\{f,g\}
	=
	-df(H_{g})
	=
	-H_{g}(f)
	=
	dg(H_{f})
	=
	H_{f}(g).
$$
\end{Lemma}

\begin{proof}
	This is obvious
	from \eqref{eq-Def-Hamiltonian-vect},
	\eqref{Def-Poisson-bracket}
	and the definition of the exterior derivative $d$.
\end{proof}

\begin{Lemma}
\label{Lem-Poisson-derivation}
	For $f,g\in C^{\infty}(M)$,
	we have
	\begin{eqnarray}
		\{f,gh\}=
		\{f,g\}h
		+
		\{f,h\}g.
		\label{eq-Lem-Poisson-derivation}
	\end{eqnarray}
\end{Lemma}

\begin{proof}
	Lemma \ref{Lem-Poisson-formulae}
	implies
	\begin{eqnarray*}
		\{f,gh\}
		&=&
		d(gh)(H_{f})
		\\
		&=&
		hdg(H_{f})
		+
		gdh(H_{f})
	\end{eqnarray*}
	by the Leibniz rule of $d$.
	This completes the proof by Lemma \ref{Lem-Poisson-formulae}.
\end{proof}

\begin{Lemma}
\label{Lem-Hamiltonian-vet-Lie-hom}
For $f,g\in C^{\infty}(M)$,
we have
$$
[H_{f},H_{g}]=H_{\{f,g\}}.
$$
\end{Lemma}

\begin{proof}
	Recall that the Lie derivative  and the interior derivative
	satisfy
	\begin{eqnarray}
		\iota_{[V,W]}
		&=&
		L_{V}\circ \iota_{W}
		-
		\iota_{W}\circ L_{V},
		\\
		L_{V}
		&=&
		\iota_{V}\circ d +d\circ \iota_{V}
		\label{eq-Cartan-magic}
	\end{eqnarray}
	for any $V,W\in {\mathfrak X}(M)$.
	Thus, we have
	\begin{eqnarray*}
		\iota_{[H_{f},H_{g}]}
		(\omega)
		&=&
		(L_{H_{f}}\circ \iota_{H_{g}}
		-
		\iota_{H_{g}}\circ L_{H_{f}})(\omega)
		\\
		&=&
		(\iota_{H_{f}}\circ d
		 +
		 d\circ \iota_{H_{f}})
		 (\iota_{H_{g}}(\omega))
		 -
		 \iota_{H_{g}}\circ 
		L_{H_{f}}
		  (\omega)
	\end{eqnarray*}
Moreover,
we have
\begin{eqnarray*}
	d\iota_{H_{g}}(\omega)
	&=&ddg
	\\
	&=&0,
	\\
	L_{H_{f}}(\omega)
	&=&
	(\iota_{H_{f}}\circ d
		 +
		 d\circ \iota_{H_{f}})(\omega)
		 \\
		 &=&0
\end{eqnarray*}
by $d\omega=0$.
These impliy
\begin{eqnarray*}
	\iota_{[H_{f},H_{g}]}
		(\omega)
		&=&
		d\circ 
		\iota_{H_{f}}
		\circ\iota_{H_{g}}(\omega)
		\\
		&=&
		d(dg(H_{f})).
\end{eqnarray*}
This completes the proof by Definition \ref{Def-Hamiltonian-vect}
and
Lemma \ref{Lem-Poisson-formulae}.
\end{proof}

\subsection{Almost K\"ahler manifold}

Let $(M,g,\omega,J)$
be a almost K\"ahler manifold
possibly with smooth boundary $\partial M$.
Namely,
$g$ is a Riemannian metric on $M$,
$\omega$ is a symplectic form on $M$,
and 
$J$ is an operator on the tangent bundle $TM$ on $M$
satisfying
\begin{eqnarray}
J^{2}&=&-1,
\label{eq-def-Kahler-JJ=-1}
\\
	g(V,W)&=&\omega(JV,W)
	\label{eq-def-Kahler-gw}
\end{eqnarray}
for any $V,W\in {\mathfrak X}(M)$.

\begin{Lemma}
\label{Lem-g-omea-J-relation-Kahler}
Let $V,W\in {\mathfrak X}(M)$.
Then,
we have
\begin{eqnarray*}
g(JV,JW)&=&g(V,W)
\end{eqnarray*}
for any $V,W\in {\mathfrak X}(M)$.
\end{Lemma}

\begin{proof}
By \eqref{eq-def-Kahler-JJ=-1}, \eqref{eq-def-Kahler-gw},
and the skew-symmetry of $\omega$,
we have
\begin{eqnarray*}
g(JV,JW)=-\omega(V,JW)=\omega(JW,V)=g(W,V)=g(V,W).
\end{eqnarray*}
This completes the proof.
\end{proof}

\begin{Lemma}
	\label{Lem-Hamiltonian-vect-Jgrad-Kahler}
	Let $f\in C^{\infty}(M)$.
	Then,
	we have
	\begin{eqnarray*}
		H_{f}
		=
		J\operatorname{grad}f.
	\end{eqnarray*}
\end{Lemma}

\begin{proof}
	By the definition of the gradient,
	we have
	\begin{eqnarray*}
		df(\cdot)
		&=&
		g(\operatorname{grad}f,\cdot).
	\end{eqnarray*}
This implies the lemma
by Definition \ref{Def-Hamiltonian-vect}
and \eqref{eq-def-Kahler-gw}.
\end{proof}

\begin{Lemma}
	\label{Lem-int-Poisson-Stokes}
Let $f,g\in C^{\infty}(M)$.
Then,
we have
\begin{eqnarray*}
	\int_{M}\{f,g\}\mu
	&=&
	-\int_{\partial M}
	f
	\iota_{H_{g}}(\mu).
\end{eqnarray*}
\end{Lemma}

\begin{proof}
	By Lemma \ref{Lem-Poisson-formulae},
	we have
	\begin{eqnarray*}
		\int_{M}
		\{f,g\}
		\mu
		&=&
		-
		\int_{M}
		H_{g}(f)\mu
		\\
		&=&
		-\int_{M}
		L_{H_{g}}(f)\mu.
	\end{eqnarray*}
Note $\operatorname{div}(H_{f})=0$ by Lemma \ref{Lem-Hamiltonian-vet-div-free}.
Thus,
this is equal to
\begin{eqnarray*}
	&=&
	-\int_{M}
	L_{H_{g}}(f\mu)
	\\
	&=&
	-\int_{M}
	d(\iota_{H_{g}}(f\mu))
\end{eqnarray*}
by the Leibniz rule of the Lie derivative
and the Cartan magic formula \eqref{eq-Cartan-magic}.
Thus,
the Stokes theorem implies the lemma.
\end{proof}

\begin{Lemma}
\label{Lem-int-Poisson-derivative}
For any $f,g,h\in C^{\infty}(M)$,
we have
$$
\int_{\partial M}
	fh\iota_{H_{g}}(\mu)
=
\int_{M}
		\left(
		-
		\{f,g\}h
		+
		f
		\{g,h\}
		\right)
		\mu.
$$
In particular,
if $fh|_{\partial M}=0$,
we have
\begin{eqnarray*}
	\int_{M}
		\{f,g\}h
		\mu
		=
		\int_{M}
		f\{g,h\}
		\mu.
\end{eqnarray*}
\end{Lemma}

\begin{proof}
	By Lemma \ref{Lem-Poisson-derivation},
	we have
	\begin{eqnarray*}
		\int_{M}
		\{g,fh\}\mu
		=
		\int_{M}
		\left(
		\{g,f\}h
		+
		f
		\{g,h\}
		\right)
		\mu.
	\end{eqnarray*}
	By Lemmas \ref{Lem-Poisson-anti-commute} and \ref{Lem-int-Poisson-Stokes},
	we have the lemma.
\end{proof}

\begin{Lemma}
\label{Lem-int-HH-gradgrad}
For $f,g\in C^{\infty}(M)$,
we have
$$
\int_{M}g
(H_{f},
H_{g})\mu
=
\int_{M}g
(\operatorname{grad}f,
\operatorname{grad}g)
\mu.
$$
\end{Lemma}

\begin{proof}
	This is obvious by Lemmas \ref{Lem-g-omea-J-relation-Kahler}
	and \ref{Lem-Hamiltonian-vect-Jgrad-Kahler}.
\end{proof}

\subsection{$L^{2}$ inner product on almost K\"ahler manifold}

Let $(M,g,\omega,J)$
be an almost K\"ahler manifold
possibly with smooth boundary $\partial M$.
	Set
\begin{eqnarray}
	\langle V,W\rangle &:=&
	\int_{M}g(V,W)\mu,
	\\
	|V|^{2}
	&:=&
	\langle V,V\rangle
\end{eqnarray}
for any $V,W\in {\mathfrak X}(M)$.

\begin{Definition}
	\label{Def-laplacian-Beltrami}
The Laplace-Beltrami operator is defined by
\begin{eqnarray*}
	\Delta:=\operatorname{div}\circ\operatorname{grad}.
\end{eqnarray*}
\end{Definition}

\begin{Lemma}
	\label{Lem-L2-Kahler-laplacian}
Let $f,g\in C^{\infty}(M)$.
Then, we have
\begin{eqnarray*}
	\langle H_{f},H_{g}\rangle
	=
	\int_{\partial M}
	f\iota_{\operatorname{grad}g}(\mu)
	-
	\int_{M}
	f\Delta(g)
	\mu.
\end{eqnarray*}
In particular,
if $f|_{\partial M}=0$,
we have
\begin{eqnarray*}
	\langle H_{f},H_{g}\rangle
	=
	-
	\int_{M}
	f\Delta(g)\mu
\end{eqnarray*}
\end{Lemma}

\begin{proof}
	We have
	\begin{eqnarray*}
		\langle H_{f},H_{g}\rangle
		&=&
		\int_{M}
		g(H_{f},H_{g})
		\mu
		\\
		&=&
		\int_{M}
		g(\operatorname{grad}f,\operatorname{grad}g)
		\mu\\
		&=&
		\int_{M}
		L_{\operatorname{grad}g}
		(f)
		\mu
	\end{eqnarray*}
	by Lemma \ref{Lem-int-HH-gradgrad} 
	and the definition of the gradient.
	By the Leibniz rule of the Lie derivative,
	this is equal to
	\begin{eqnarray*}
		&=&
		\int_{M}
		L_{
		\operatorname{grad}g}
		(f\mu)
		-
		f
		L_{\operatorname{grad}g}(\mu)
		\\
		&=&
		\int_{M}
		d\circ\iota_{
		\operatorname{grad}g}
		(f\mu)
		-
		f
		\Delta(g)\mu.
	\end{eqnarray*}
	This completes the proof
	by the Stokes theorem.
\end{proof}

\subsection{2D Riemannian manifold}
Let $M$ be an orientable two-dimensional Riemannian manifold
possibly with smooth boundary $\partial M$.
Note that $\dim M=2$ implies
that the Hodge star operator $\star$ satisfies
\begin{eqnarray*}
	\star^{2}=-1
\end{eqnarray*}
as an operator on ${\mathfrak X}(M)$.

\begin{Lemma}
\label{Lem-2-dim-Riemannian-is-Kahler}
	Define a two-form $\omega$ on $M$
	by
	\begin{eqnarray*}
		\omega(V,W):=g(\star V, W),
	\end{eqnarray*}
	where $V,W\in {\mathfrak X}(M)$.
	Then,
	$(M,g,\omega,\star)$
	is an almost K\"ahler manifold.
\end{Lemma}

\begin{proof}
	This follows from the definition.
\end{proof}

\begin{Lemma}
\label{Lem-tangent-Poisson-zero-on-boundary}
	Let $f,g\in C^{\infty}(M)$
	with
	$H_{f},H_{g}\in {\mathfrak X}^{t}(M)$.
	Then,
	we have
	\begin{eqnarray*}
		\{f,g\}|_{\partial M}=0.
	\end{eqnarray*}
\end{Lemma}

\begin{proof}
Note that $H_{f}$ and $H_{g}$ are tangent to $\partial M$ by the assumption.
Therefore,
we have
\begin{eqnarray*}
	g(\star H_{f}, H_{g})|_{\partial M}
		&=&
		0
\end{eqnarray*}
because
$\star$ is the $\frac{\pi}{2}$ rotation operator.
On the other hand,
we have
\begin{eqnarray*}
	\{f,g\}
	&=&
	-\omega(H_{f},H_{g})
	\\
	&=&
	g(\star H_{f}, H_{g})
\end{eqnarray*}
by 
Definition \ref{Def-Poisson-bracket}
and
\eqref{eq-def-Kahler-gw}.
This completes the proof.
\end{proof}

\begin{Lemma}
	\label{Lem-Poisson-triple-zero-on-boundary}
	Let $f,g,h\in C^{\infty}(M)$
	with 
	$H_{f},H_{g},H_{h}\in {\mathfrak X}^{t}(M)$.
	Then,
	we have
	\begin{eqnarray*}
		\{\{f,g\},h\}|_{\partial M}=0.
	\end{eqnarray*}
\end{Lemma}

\begin{proof}
	By Lemma \ref{Lem-tangent-Poisson-zero-on-boundary},
	$\{f,g\}$ is constant on $\partial M$.
	Thus,
	we have the lemma because
	$H_{h}$ is tangent to $\partial M$
	and
	$\{\{f,g\},h\} = -H_{h}(\{f,g\})$
	by Lemma \ref{Lem-Poisson-formulae}.
\end{proof}

\begin{Lemma}
	\label{Lem-inner-product-on-2D}
Let $f,g,h\in C^{\infty}(M)$
	with
	$H_{f},H_{g}\in {\mathfrak X}^{t}(M)$.
	Then,
	we have
\begin{eqnarray*}
	\langle H_{\{f,g\}},H_{h}\rangle
	=
	-
	\int_{M}
	\{f,g\}\Delta(h)\mu,
	\\
	\langle H_{\{\{f,g\},g\}},H_{h}\rangle
	=
	-
	\int_{M}
	\{
	\{f,g\},g\}\Delta(h)\mu.
\end{eqnarray*}
\end{Lemma}

\begin{proof}
	This follows from
	Lemmas \ref{Lem-L2-Kahler-laplacian},
	\ref{Lem-tangent-Poisson-zero-on-boundary},
	and
	\ref{Lem-Poisson-triple-zero-on-boundary}.
\end{proof}


\begin{thebibliography}{99}

\bibitem{Arnold}
V. I. Arnold, 
\textit{Sur la geometrie differentielle des groupes de Lie de dimension infinie et ses applications a l'hydrodynamique des fluids parfaits}, 
Ann. Inst. Grenoble {\bf 16} (1966).

\bibitem{AK}
 V. I. Arnold,
B. Khesin, 
Topological methods in hydrodynamics, 
Applied Mathematical Sciences, {\bf 125}, Springer-Verlag, New York, (1998), xvi+374 pp.


\bibitem{Kahler-ballman}
W. Ballmann,
Lectures on K\"ahler manifolds, 
ESI Lectures in Mathematics and Physics, 
European Mathematical Society, Z\"urich, 
(2006), x+172 pp.

\bibitem{BottTu}
R. H. Bott, 
L. W. Tu,
Differential forms in algebraic topology, 
Graduate Texts in Mathematics, {\bf 82}, 
Springer-Verlag, New York-Berlin, (1982), xiv+331 pp.

\bibitem{PTD}
P. Constantin, 
T. D. Drivas,
D. Ginsberg,
Flexibility and rigidity in steady fluid motion,
Commun. Math. Phys. {\bf 385} (2021), 521--563.




\bibitem{DMSY}
T. D. Drivas, 
G. Misio{\l}ek, 
B. Shi,
T. Yoneda,
Conjugate and cut points in ideal fluid motion,
Ann. Math. Qu\'ebec (2021).
https://doi.org/10.1007/s40316-021-00176-4,
Available also at arXiv:2105.11869.

\bibitem{EMa}
D. Ebin, J. Marsden, 
Groups of diffeomorphisms and the motion of an incompressible fluid, 
Ann. of Math. (2) {\bf 92} (1970), 102-163.

\bibitem{ISM-Lee}
J. M. Lee,
Introduction to smooth manifolds,
Second edition, Graduate Texts in Mathematics, 
{\bf 218}, 
Springer, New York, (2013), 
xvi+708 pp.


\bibitem{Misiolek-Stability} 
G. Misio{\l}ek, 
Stability of flows of ideal fluids and the geometry
of the group of diffeomorphisms, 
Indiana Univ. Math. J. \textbf{42} (1993), 215--235.

\bibitem{Misiolek-T}
G. Misio{\l}ek,
Conjugate points in $\mathcal D_\mu(T^2)$, 
Proc. Amer. Math. Soc. \textbf{124} (1996), 977--982.


\bibitem{almost-Kahler-ST}
T. Sato,
Almost Kähler manifolds of constant holomorphic sectional curvature,
Tsukuba J. Math. {\bf 20} (1996), no. 2, 
517--524. 



\bibitem{TTconj}
T. Tauchi,
T. Yoneda,
Existence of a conjugate point in the incompressible Euler flow on an ellipsoid,
J. Math. Soc. Japan Advance Publication (December, 2021), 1--25. 
https://doi.org/10.2969/jmsj/83868386
	
\bibitem{TTCoriolis}
T. Tauchi, 
T. Yoneda, 
Positivity for the curvature of the diffeomorphism group corresponding to the incompressible Euler equation with Coriolis force, Progress of Theoretical and Experimental Physics, (2021), ptab043.
https://doi.org/10.1093/ptep/ptab043




\bibitem{Deform-Hodge}
A. Trautman, 
Deformations of the Hodge map and optical geometry, 
J. Geom. Phys. {\bf 1} (1984), no. 2, 85--95. 



\bibitem{DF-Weintraub}
S. H. Weintraub,
Differential forms,
Theory and practice, Second edition, 
Elsevier/Academic Press, Amsterdam, (2014). xii+395 pp.



\end{thebibliography}
\end{document}